\documentclass[12pt]{article}

\usepackage[english]{babel}
\usepackage{graphicx}
\usepackage{amsmath}
\usepackage{amsthm}
\usepackage{amsfonts}
\usepackage{amssymb}
\usepackage{mathabx}
\usepackage{epstopdf}
\usepackage{xcolor}
\usepackage{enumerate}
\usepackage{comment}
\usepackage{bm}
\usepackage{cite}
\theoremstyle{plain}
\newtheorem{theorem}{Theorem}[section]

\newtheorem{prop}[theorem]{Proposition}

\usepackage[colorlinks=true,citecolor=black,linkcolor=black,urlcolor=blue]{hyperref}
\usepackage {tikz}
\usetikzlibrary{automata,arrows,calc,positioning}
\usetikzlibrary{ decorations.pathreplacing}
\usetikzlibrary{positioning, decorations.text}
\usepackage{pdfpages}
\usepackage{pgf,tikz,pgfplots}
\pgfplotsset{compat=1.14}
\usepackage{mathrsfs}
\usepackage{mathtools}
\usetikzlibrary{arrows}

\newcommand{\Relchangep}[2]{\mbox{Rel}(#1;#2)}

\newcommand{\splitRelchangep}[4]{\mbox{splitRel}_{#2,#3}(#1;#4)}

\begin{document}
\definecolor{ududff}{rgb}{0.30196078431372547,0.30196078431372547,1}

\title{ On the Real Reliability Roots of Graphs}
\author{Jason I. Brown  
and 
Isaac McMullin  \\ Department of Mathematics and Statistics, Dalhousie University,\\Halifax, Canada\\ \\}

\maketitle



\begin{abstract}
Consider a connected graph $G$, and assume that every edge fails independently with probability $q$. The {\em (all-terminal) reliability polynomial} is the probability in $q$ that the spanning connected subgraph of operational edges is connected. In this paper we focus on the real roots of reliability polynomials ({\em reliability roots}). We prove that almost every graph has a nonreal reliability root, and that the reliability polynomials of graphs have roots dense on the interval $[\beta,0]$ where $\beta\approx-0.5707202942$.
\end{abstract}

\vspace{0.25in}
\noindent \textit{Keywords}: graph, reliability, roots, real-rootedness, corank, split reliability\\
\noindent \textit{Proposed running head}: Real Reliability Roots of Graphs

%
 %

\section{Introduction} 
A multigraph $G=(V,E)$ consists of a finite vertex set $V$ and a finite edge multiset $E$ consisting of unordered pairs of elements of $V$ (the {\em order} and {\em size} of $G$ are, respectively, $|V|$ and $|E|$). A simple graph, or simply a {\em graph}, is one where $E$ is a set (that is, there are no multiple edges). 
For standard graph theory terminology we refer the reader to \cite{west}.

There are many ways to model a network's robustness against random failure. One of the most common ways is as follows. Suppose that every vertex of a multigraph $G$ (we assume from here on in that all multigraphs and graphs are always connected) is always ``operational'' but each edge independently fails with probability $q \in [0,1]$ (so each edge is independently operational with probability $p = 1-q$). The {\em (all-terminal) reliability} $\Relchangep{G}{q}$ of graph $G$ is the probability that for any two vertices $u$ and $v$, there is a path of operational edges joining them, that is, that the spanning subgraph of $G$ consisting of the operational edges is connected. The history of reliability, in both theoretical and applied settings, is old and well established (see, for example, \cite{colbook}). 

For a graph $G$, its reliability can be expressed as
\begin{eqnarray*}
    \Relchangep{G}{q}&=&\sum_{E'}(1-q)^{|E'|}q^{|E|-|E'|}
\end{eqnarray*}
where the sum is taken over all $E'$ such that the spanning subgraph of $G$ with edge set $E'$ is connected. We see immediately that reliability functions are always polynomials in $q$ (and $p$), and hence we can refer to {\em reliability polynomials}.

For many graph polynomials, what has attracted attention is the {\em shape} of the coefficients of the polynomials, with many results and conjectures centering on {\em unimodality} -- the observation that the coefficient sequence is nondecreasing then nonincreasing. Indeed the coefficients of various expansions of the reliability polynomial had all been conjectured to be unimodal (see \cite{colbook}), and only recently has this been confirmed for some sequences \cite{flogconcave, hlogconcave}, with proofs using deep techniques from algebraic geometry. 

A famous -- and beautiful! -- theorem due to Newton (see, for example \cite{comtet}) proves that if a polynomial with positive coefficients has all real roots (i.e. is {\em real-rooted}) then the coefficient sequence is unimodal. This observation has led many to explore the real (and complex) roots of various graph polynomials, including {\em chromatic polynomials}, {\em matching polynomials}, {\em independence polynomials}, {\em domination polynomials}, and indeed reliability polynomials (see, for example, \cite{chromaticpolynomials, matchingpolynomials1, independencepolynomials, dominationpolynomials1, dominationpolynomials2, brownunitdiscconj}).
And indeed, while some graph polynomials, such as matching polynomials, always have all real roots, most graph polynomials have real and nonreal roots, and characterizing when the roots are always real (i.e. the polynomial is real-rooted) has been of considerable interest.

Some families of graphs, such as trees and cycles (the graphs of respectively {\em corank} $d = m-n+1$ of $0$ and $1$) have all real reliability roots. For corank $d = 2$, there are graphs that have nonreal reliability roots. The $\Theta$-graph, consisting of two vertices $x$ and $y$ joined by internally disjoint paths of lengths $l_{1}, l_{2}, l_{3}$, with $1 \leq l_{1} \leq l_{2} \leq l_{3}$, has all real reliability roots if and only if $l_{1}\leq l_{2}+l_{3}-2\sqrt{l_{2}l_{3}}=(\sqrt{l_{3}}-\sqrt{l_{2}})^{2}$. Even the multigraph $B_k$ consisting of a bundle of $k$ edges between two vertices has nonreal reliability roots -- the reliability polynomial of $B_{k}$ is $1-q^k$, which has all real roots if and only if $k =1$ or $2$.

While it is not the case that every reliability root is real, in \cite{brownunitdiscconj} it was shown that for any multigraph $G$ there exists a subdivision $G'$ (and hence infinitely many subdivisions) such that the reliability roots of $G'$ are all real. Despite this, the question of how common real-rootedness is for reliability polynomials has never been explored.

The closure of the real roots of reliability polynomials of multigraphs is precisely $[-1,0] \cup \{1\}$ \cite{brownunitdiscconj}. However, the proof in \cite{brownunitdiscconj} uses multigraphs in an essential way, and so the corresponding problem for graphs seems much more subtle and difficult.

This paper will address two main questions:
\begin{enumerate}
    \item How common is real-rootedness for reliability polynomials of graphs?
    \item What is the closure of the real roots of reliability polynomials of graphs?
\end{enumerate} 


\section{Almost All Graphs Have a Nonreal Reliability Root}

In the context of graphs, we can consider the well-known model $\mathcal{G}(n,\rho)$ of Erd\H{o}s-R\'{e}nyi random graphs (see, for example, \cite{bollobas}) and ask how common real-rootedness of reliability polynomials is. One might suspect from the result on graph subdivisions that real-rootedness is fairly common, but we shall see that this is far from the truth.

To do so, we need to write a couple of different expansions of the reliability polynomial (see, for example, \cite{colbook} for this and the following discussion of the forms). If $G$ is a graph of order $n$ and size $m$ (i.e. $G$ is an $(n,m)$-graph) then setting $d = m-n+1$, the corank of $G$, the $F$-form of the reliability polynomial is given by
\begin{eqnarray*}
    \Relchangep{G}{q}&=&\sum_{i=0}^{d}F_{i}q^{i}(1-q)^{m-i}.
\end{eqnarray*}
Here $F_{i}$ is the number of subsets of $i$ edges from $G$ whose removal leaves the graph connected. The set of all such subsets of $E(G)$ forms a {\em simplicial complex}, that is, a collection of sets closed under subsets, called the {\em cographic matroid} of $G$. Any matroid is {\em partitionable} into intervals (see \cite{colbook} for a definition). If $H_{i}$ is the number of intervals in our partition of the cographic matroid whose smallest set is of size $i$, the $H$-form of the reliability polynomial is given by
\begin{eqnarray*}
    \Relchangep{G}{q}&=&(1-q)^{n-1}\sum_{i=0}^{d}H_{i}q^{i}.
\end{eqnarray*}
The connection between the $F$- and $H$-forms is
\begin{eqnarray}
    F_{k} &=& \sum_{r=0}^{k}H_{r}{d-r \choose k-r} \label{HtoFform}
\end{eqnarray}
for $k = 0,1,\ldots,d$. We call the vectors $<F_{0},F_{1},...,F_{m-n+1}>$ and $<H_{0},H_{1},...,H_{m-n+1}>$ the $F$-vector and the $H$-vector respectively for the graph $G$. It is well known that all of the $F_{i}$'s and $H_{i}$'s are positive integers \cite{colbook} and in fact both vectors are unimodal \cite{flogconcave, hlogconcave}. 

Some additional notation is needed. A set of edges is a {\em cutset} if the removal of those edges results in the graph becoming disconnected. Let $c_{i}$ be the number of edge cutsets of $G$ of size $i$. We begin with the following result. 

\begin{prop}\label{c2bound}
Let $G$ be a connected multigraph of order $n$, size $m$ and corank $d=m-n+1 \geq 2$. If $c_{1}=0$ and $c_{2}<\frac{m(n-1)}{2d}$, then the reliability of $G$ has a nonreal root.
\end{prop}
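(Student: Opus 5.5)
The plan is to pass from $\Relchangep{G}{q}$ to the $H$-polynomial and apply Newton's inequalities to its first three coefficients. By the $H$-form, $\Relchangep{G}{q}=(1-q)^{n-1}h(q)$ with $h(q)=\sum_{i=0}^{d}H_iq^i$. The factor $(1-q)^{n-1}$ contributes only the real root $1$, so $\Relchangep{G}{q}$ is real-rooted if and only if $h$ is. All $H_i$ are positive, so $h$ has degree exactly $d$. I will argue by contrapositive: assume $h$ is real-rooted and deduce $c_2\ge \frac{m(n-1)}{2d}$.

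\emph{Step 1: compute $H_0,H_1,H_2$ from (\ref{HtoFform}).} First, $F_0=1$, so $H_0=1$. Next, $F_1=m-c_1=m$, since every non-bridge edge can be removed while keeping $G$ connected. Then $F_1=dH_0+H_1$ gives $H_1=m-d=n-1$. For $F_2$, because $c_1=0$, the $2$-subsets whose removal disconnects $G$ are exactly the $c_2$ two-edge cutsets. Hence $F_2=\binom{m}{2}-c_2$. On the other hand, (\ref{HtoFform}) gives $F_2=\binom{d}{2}+(n-1)(d-1)+H_2$. Expanding $\binom{m}{2}$ with $m=(n-1)+d$ as $\binom{n-1}{2}+(n-1)d+\binom{d}{2}$, we obtain
\[ H_2=\binom{n-1}{2}+(n-1)-c_2=\frac{n(n-1)}{2}-c_2 . \]

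\emph{Step 2: Newton's inequality.} For a real-rooted polynomial $\sum_{i=0}^d a_iq^i$, Newton's inequalities give $a_k^2\ge a_{k-1}a_{k+1}\left(1+\frac1k\right)\left(1+\frac1{d-k}\right)$. Taking $k=1$ (legitimate since $d\ge2$) yields $H_1^2\ge \frac{2d}{d-1}H_0H_2$. Substituting the values from Step 1:
\[ (n-1)^2(d-1)\ge d\,n(n-1)-2d\,c_2 , \]
that is,
\[ 2d\,c_2\ge (n-1)\bigl(dn-(n-1)(d-1)\bigr)=(n-1)(n-1+d)=(n-1)m. \]
So real-rootedness forces $c_2\ge \frac{m(n-1)}{2d}$, and the hypothesis $c_2<\frac{m(n-1)}{2d}$ therefore produces a nonreal root.

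\emph{Main obstacle.} The only delicate step is the bookkeeping for $F_2$ in the multigraph setting. A parallel pair of edges, or any two edges forming a $2$-edge cut, must be counted in $c_2$ exactly once. The hypothesis $c_1=0$ is what ensures that no $2$-subset is disconnecting merely because it contains a bridge. I would double-check this identification before invoking Newton's inequality; the rest is routine algebra.
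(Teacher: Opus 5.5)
Your proof is correct, and it reaches the paper's inequality by a different tool. The paper reverses $h$ to $f(q)=q^dh(1/q)$ and performs one step of the Sturm sequence, dividing $f_1=f'$ into $f_0=f$. It then requires the leading coefficient of $f_2$, namely $\frac{(n-1)^2(d-1)}{d^2}-\frac{2H_2}{d}$, to be nonnegative (or $f_2\equiv 0$). You instead quote Newton's inequality at $k=1$.

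These are exactly the same condition. For $f=a_dq^d+a_{d-1}q^{d-1}+a_{d-2}q^{d-2}+\cdots$ the leading coefficient of $f_2$ is $\frac{(d-1)a_{d-1}^2}{d^2a_d}-\frac{2a_{d-2}}{d}$. Its nonnegativity is $a_{d-1}^2\ge\frac{2d}{d-1}a_da_{d-2}$, which with $a_d=H_0$, $a_{d-1}=H_1$, $a_{d-2}=H_2$ is your $H_1^2\ge\frac{2d}{d-1}H_0H_2$. So the paper's long division is in effect a hand derivation of that single Newton inequality.

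Your route is shorter. It needs no reversal or polynomial division. It also avoids the paper's somewhat informal use of Sturm's theorem when roots may be repeated (the ``$f_2$ identically $0$'' case), because Newton's inequalities hold for every real-rooted real polynomial, with equality allowed. Your closed form $H_2=\binom{n}{2}-c_2$ also makes the final algebra more transparent than the paper's expression. The Sturm setup could in principle yield further necessary conditions from later terms of the sequence, but the paper does not use them.

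Your stated worry about $F_2$ is harmless. Under the paper's definition, $c_2$ counts every $2$-edge set whose removal disconnects $G$, so $F_2=\binom{m}{2}-c_2$ holds even without $c_1=0$. That hypothesis is genuinely needed only to get $H_1=n-1$.
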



\begin{proof}
We know that we can write the reliability in the $H$-form
\begin{eqnarray*}
\Relchangep{G}{q} &=& (1-q)^{n-1}\big(H_{0}+H_{1}q+H_{2}q^{2}+...+H_{d}q^{d}\big).
\end{eqnarray*}
Since the $(1-q)^{n-1}$ here only adds roots of $q=1$, we will ignore it and we need only consider the roots of the {\em $H$-polynomial}
\begin{eqnarray*}
h(q) &=& \sum_{i=0}^{d}H_{i}q^{i}.
\end{eqnarray*}
While calculating the roots of a polynomial precisely is in general intractable, there is an algorithm, due to Sturm, that can be used to determine if a polynomial with real coefficients has all real roots. If $f_{1}$ and $f_{2}$ are polynomials in variable $q$, let $\mbox{rem}(f_{1},f_{2})$ be the remainder of $f_{1}$ when divided by $f_{2}$. The {\em Sturm sequence} of a nonzero polynomial $f = f(q)$ is a sequence of polynomials $f_{0}, f_{1}, ..., f_{k}$ defined as 
\begin{eqnarray*}
    f_{0} & = & f,\\
    f_{1} & = & f',\\
    f_{i} & = & -\mbox{rem}(f_{i-1},f_{i-2}) \mbox{ for $i\geq 2$.}
\end{eqnarray*}
As the polynomials $f_{i}$ strictly decrease in degree, we can terminate this process at the last non-zero polynomial, $f_{k}$. The Sturm sequence is then $f_{0}, f_{1}, ..., f_{k}$. Sturm's Theorem (see, for example, \cite{broHick2}) states that if $f$ has positive leading coefficient, then $f$ has all real roots if and only if all terms in the Sturm sequence have positive leading coefficient and there are no gaps in degree, that is, $\mbox{deg}(f_{i+1})=\mbox{deg}(f_{i})-1$ for $i=0,1,\ldots,k-1$.

While we could consider the Sturm sequence of the polynomial $h(q)$, the difficulty arises in that we know more about the coefficients of $H_i$ when $i$ is small rather than when $i$ is large. So instead of looking at the Sturm sequence of $h(q)$ directly, we instead will examine the Sturm sequence of $f = f(q) = q^{d}h(1/q)$, the reversal of the polynomial $h(q)$. Clearly, $h(q) = \sum_{i=0}^{d}H_iq^i$ has all real roots if and only if $f(q) = q^{d}h(1/q)$ does. As $d\geq 2$, the Sturm sequence of $f$ must have at least 2 terms. Now
\begin{eqnarray*}
f_{0}&=&H_{0}q^{d}+H_{1}q^{d-1}+H_{2}q^{d-2}+...+H_{d}\\
f_{1}&=&dH_{0}q^{d-1}+(d-1)H_{1}q^{d-2}+(d-2)H_{2}q^{d-3}+...+H_{d-1}\\
f_{2}&=& -\mbox{rem}(f_{0},f_{1}).
\end{eqnarray*}
From (\ref{HtoFform}) we can determine that $H_{0}=F_{0}=1$ and since $G$ has no cut edges (i.e. $c_{1}=0$), $F_{1} = m = dH_{0} + H_{1}$, so $H_1 = n-1$.
$H_{2}$ can be found by calculating $F_{2}$ in two different ways. First, $F_{2}={m\choose2}-c_{2}$. On the other hand, again from (\ref{HtoFform}) we have  
\begin{eqnarray*}
F_{2} &=& H_{0}{d\choose2}+H_{1}{d-1\choose1}+H_{2}{d-2\choose0}\\
      &=& H_{2}+(d-1)(n-1)+{d\choose2}.
\end{eqnarray*}
It follows that 
\begin{eqnarray}\label{H2C2calc}
H_{2}&=& {m\choose2}-c_{2}-(d-1)(n-1)-{d\choose2}.
\end{eqnarray}

With all this given, we can look at the third term $f_2$ of the Sturm sequence. We will only be calculating the leading coefficient of $f_{2}$ as that is all we will need. A calculation shows that dividing $f_1$ into $f_0$ yields
\begin{eqnarray*}
& & H_{0}q^{d}+H_{1}q^{d-1}+H_{2}q^{d-2}+...+H_{d}\\
&=&(dH_{0}q^{d-1}+(d-1)H_{1}q^{d-2}+(d-2)H_{2}q^{d-3}+...+H_{d-1})\cdot\bigg(\frac{1}{d}q+\frac{n-1}{d^{2}}\bigg)\\
& &+\bigg(\frac{2H_{2}}{d}-\frac{(n-1)^{2}(d-1)}{d^{2}}\bigg)q^{d-2}+...\text{ },
\end{eqnarray*}
so we find that, using (\ref{H2C2calc}),
\begin{eqnarray*}
f_{2}&=& \bigg(\frac{(n-1)^{2}(d-1)}{d^{2}}-\frac{2H_{2}}{d}\bigg)q^{d-2}+...\\
&=& \bigg(\frac{(2c_{2}-n+1)m-2c_{2}(n-1)}{d^{2}}\bigg)q^{d-2}+...\text{ }.
\end{eqnarray*}
Thus, if the reliability polynomial of $G$ had all real roots, it would be the case that $f_2$ is identically $0$ or $f_2$ must have positive leading coefficient, so that 
\begin{eqnarray*}
\frac{(2c_{2}-n+1)m-2c_{2}(n-1)}{d^{2}}\geq 0.
\end{eqnarray*}
However, this would imply that 
\begin{eqnarray*} 
c_{2}\geq\frac{m(n-1)}{2d},
\end{eqnarray*}
which is not the case. Therefore, it must be that the reliability polynomial of $G$ has a nonreal root.
\end{proof}
We are now ready to prove our main result.

\begin{theorem}\label{almostallnonreal}
    Almost every graph $G \in \mathcal{G}(n,\rho)$ has a nonreal reliability root.
\end{theorem}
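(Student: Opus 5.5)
We will deduce the statement from Proposition~\ref{c2bound}. For fixed $\rho\in(0,1)$ it is enough to show that, with probability tending to $1$ as $n\to\infty$, a graph $G\in\mathcal{G}(n,\rho)$ satisfies four conditions: it is connected, its corank $d=m-n+1$ is at least $2$, $c_1=0$, and $c_2<\frac{m(n-1)}{2d}$. We check these one at a time and then combine them with a union bound over the finitely many failure events.

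Connectivity and the absence of cut edges are standard. Almost surely the minimum degree is at least $3$ (indeed it is of order $\rho n$), and $G$ is $2$-edge-connected. One way to see this is that $\mathcal{G}(n,\rho)$ with constant $\rho$ has edge-connectivity equal to its minimum degree with probability $1-o(1)$. A more self-contained route is that almost surely every pair of vertices has many common neighbours, so removing one or two edges cannot disconnect the graph. So $c_1=0$. Chernoff bounds give $m=\rho\binom{n}{2}(1+o(1))$, so $d=m-n+1\sim \rho n^2/2\geq 2$.

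The main step is bounding $c_2$. A $2$-edge cutset in a graph with $c_1=0$ either isolates a vertex of degree $2$ or separates a larger vertex set $S$ with exactly two edges leaving it. Under the high-probability event above, every vertex has degree at least, say, $\rho n/2$. Then for $|S|=k$ with $1\le k\le n/2$, the number of edges leaving $S$ is at least $k(\rho n/2 - k)$, which exceeds $2$ for small $k$. For larger $k$ we use instead the fact that almost surely every cut $(S,\bar S)$ has about $\rho|S||\bar S|$ edges; this follows from a Chernoff bound together with a union bound over $\binom{n}{k}$ sets. Hence $c_2=0$ with probability $1-o(1)$. The right-hand side $\frac{m(n-1)}{2d}$ is positive, and in fact tends to about $(n-1)/2$, so the inequality $c_2<\frac{m(n-1)}{2d}$ holds trivially.

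Putting these together, with probability $1-o(1)$ the graph $G$ satisfies all hypotheses of Proposition~\ref{c2bound}, and therefore has a nonreal reliability root. The main obstacle is making the "no small edge cuts" claim rigorous. The cleanest approach is to take as the single high-probability event that all degrees and all common-neighbourhood sizes are at least $\rho^2 n/2$. Then any cut with $|S|\ge 1$ and $|\bar S|\ge 1$ has at least $\min_v \deg(v)-|S|+1$ crossing edges when $|S|$ is small, and at least on the order of $n$ crossing edges otherwise, via common neighbours of pairs $u\in S$, $v\notin S$. In both cases there are more than two crossing edges, so $c_1=c_2=0$.
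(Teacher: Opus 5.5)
Your proposal is correct and is essentially the paper's argument: with probability $1-o(1)$ the graph has edge-connectivity at least $3$, so $c_1=c_2=0<\frac{m(n-1)}{2d}$, and Proposition~\ref{c2bound} applies. The differences are that you check $d\ge 2$ explicitly and prove the edge-connectivity yourself via common neighbourhoods, whereas the paper simply cites the standard fact that $\mathcal{G}(n,\rho)$ is almost surely $k$-edge-connected for each fixed $k$; your common-neighbour bound alone already handles every nonempty proper $S$, so the case split on $|S|$ is unnecessary.
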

\begin{proof}

It is well known that for every fixed integer $k$, almost every  graph $G \in \mathcal{G}(n,\rho)$ is at least $k$-edge-connected ($\lambda(G)\geq k$) \cite{bollobas}. Therefore, for almost every such graph $G$, $\lambda(G)> 2$ and so $c_{2}=0<\frac{m(n-1)}{2(m-n+1)}$. It follows from Proposition \ref{c2bound} that the reliability polynomial of almost every  graph $G \in \mathcal{G}(n,\rho)$ has a nonreal root.
\end{proof}

\section{Density of the Real Reliabity Roots of Graphs}\label{atreldensity}

It is known \cite{brownunitdiscconj} that all real reliability roots of multigraphs are bounded, and are indeed in $[-1,0)\cup\{1\}$. As well, in fact for {\em multigraphs}, the region is covered --  the closure of real reliability roots for multigraphs is in fact all of $[-1,0]\cup\{1\}$  (we point out that to get the closure result, multiple edges play an essential role in the construction). This contrasts with the complex roots, which are only conjectured to be bounded.

In terms of the closure of the real reliability roots of graphs, the question seems more subtle than for multigraphs. What is not clear is whether the set of reliability roots of multigraphs strictly contains the set of reliability roots of graphs. Perhaps surprisingly, in \cite{rationalroots} it was shown that indeed the two sets differ, in that $q=-1$, which is the reliability root of many multigraphs, is never the reliability root of a graph (it is not hard to see that given any multigraph $G$ of order at least $2$, replacing each edge by a bundle of $2k$ edges transforms the reliability polynomial by replacing $q$ by $q^{2k}$, and hence $q = -1$ becomes a root as $1$ is always a root of a reliability polynomial). It seems natural then to focus on graphs.

Computations have led us to conjecture that the closure of the real reliability roots of graphs is in fact $[-1,0]\cup\{1\}$ as well, but a proof has been elusive. We can, however, find a subinterval of  $[-1,0]$ in the closure. Let us define 

\begin{eqnarray*}
    \beta&=&\frac{\big(3(708)^{1/2}-\frac{629}{8}\big)^{1/3}}{12}-\frac{23}{48\big(3(708)^{1/2}-\frac{629}{8}\big)^{1/3}}-\frac{5}{24}
\end{eqnarray*}
($\beta \approx -0.5707202942$ will turn out to be a root of a specific polynomial).

We now prove the following.

\begin{theorem}\label{densitysimplegraphs}
    The roots of the reliability polynomials of graphs are dense on the interval $[\beta,0]$.
\end{theorem}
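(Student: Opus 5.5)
The plan is to build a family of simple graphs whose reliability polynomials have an explicit rational-function structure in $q$, and then use a Beraha--Kahane--Weiss type limiting argument. The macro $\splitRelchangep{G}{u}{v}{q}$ defined in the preamble suggests the tool: the \emph{split reliability}, i.e.\ the probability that the operational subgraph has exactly two components, one containing $u$ and the other containing $v$.

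The basic building block is a fixed simple graph $H$ with two distinguished vertices $u,v$. Glue $k$ copies of $H$ in parallel at $u$ and $v$, or, more flexibly, take several copies of gadgets in series and in parallel, possibly with paths attached. Conditioning on each copy being connected between $u$ and $v$ or split gives
\begin{eqnarray*}
\Relchangep{G_k}{q} &=& (R+S)^k - S^k,
\end{eqnarray*}
where $R=\Relchangep{H}{q}$ and $S=\splitRelchangep{H}{u}{v}{q}$. This is a simple graph provided $u,v$ are nonadjacent in $H$ or only one copy contains the edge $uv$. The roots of $G_k$ then satisfy $\left|1+R/S\right|=1$ in the limit. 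On the real line, a root $q$ with $R(q)/S(q)=-2$ gives the limiting equation $(-1)^k=1$, which is satisfied for even $k$.

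To get density on an interval I would instead use a two-parameter family. One option is to replace $H$ by a path-subdivided version $H_\ell$, where $u$ and $v$ are joined through a path of length $\ell$, so that $R$ and $S$ vary with $\ell$ in a controlled way, e.g.\ $S_\ell = \ell q^{\ell-1}(1-q)\cdots$. Alternatively, take $G_{k,j}$ formed by combining $k$ copies of one gadget with $j$ copies of another, yielding an expression $A_1^{k}B_1^{j} - A_2^{k}B_2^{j}$. By the Beraha--Kahane--Weiss theorem (or a direct intermediate value argument, since everything is real), a real $q_0$ is a limit of roots whenever two dominant terms have equal modulus there. With two independent exponents $k$ and $j$, the ratio $k/j$ can be tuned so that real roots occur at any prescribed point of an interval. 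Concretely, I would show that on $(\beta,0)$ the function $\phi(q)=\log|A_1/A_2|\,/\,\log|B_2/B_1|$ is continuous and takes a range covering all positive reals. Then for any target $q_0$ and rationals $k/j\to\phi(q_0)$ with the right parities, the sign of $A_1^kB_1^j-A_2^kB_2^j$ changes near $q_0$, producing a real root nearby by the intermediate value theorem.

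The constant $\beta$ should emerge as the real root of a cubic such as $8q^3+5q^2+\dots$ (Cardano's form with $\sqrt{708}$) where some gadget quantity, for instance $R+2S$ or $|R+S|=|S|$, degenerates. That is, $\beta$ is where the needed modulus inequality between $A_i$ and $B_i$ fails. I would identify the gadget by computing small cases: $H=K_4$ minus an edge, or $K_3$ with $u,v$ two vertices, where $R=(1-q)^2(1+2q)$ and $S=2q(1-q)\cdot q+\dots$, and check which cubic arises.

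The main obstacle is choosing the right gadget and verifying the inequalities on the whole interval $(\beta,0)$. Specifically, we need that the two competing terms dominate all other terms and that the modulus ratio passes through $1$ with a sign change for real $q$, with $\beta$ being exactly the endpoint where this fails. Simplicity of the graphs also constrains the constructions, since multiple edges are not available, and this is where the $[-1,\beta)$ part is lost.
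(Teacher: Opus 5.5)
Your proposal is a plan rather than a proof. The step that carries all the weight is fixing a gadget, exhibiting a family of simple graphs whose real roots accumulate at every point of $(\beta,0)$, and verifying the property of $\phi$ your intermediate value argument needs. You defer exactly this as ``the main obstacle.''

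The pieces you do make concrete do not lead there either. Write $R=\Relchangep{H}{q}$ and $S=\splitRelchangep{H}{u}{v}{q}$.

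\begin{itemize}
\item The one-parameter family $(R+S)^k-S^k$ has real zeros only where $R=0$ or, for even $k$, where $R+2S=0$, since a real $k$-th root of unity is $\pm1$. So its real roots form a finite set independent of $k$, and give no density. Your hunch about $\beta$ is nonetheless right: for $H=K_4-e$ with $u,v$ the nonadjacent vertices, $R+2S=(1-q)^2(8q^3+5q^2+2q+1)$, whose real root is $\beta$. But this family yields only the single point $\beta$.
\item The two-gadget form $A_1^kB_1^j-A_2^kB_2^j$ arises from gluing copies in parallel at $u,v$, with $A_1=R_1+S_1$, $A_2=S_1$, $B_1=R_2+S_2$, $B_2=S_2$. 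This family cannot reach the right end of the interval. For fixed gadgets, $R_i\to1$ and $S_i\to0$ as $q\to0^-$, so $|R_i+S_i|>|S_i|$ for every $i$ on some $(-\delta,0)$. Hence there are no real roots there, whatever the exponents.
\item ``$\phi$ covers all positive reals'' is not the relevant condition. To place a root within $\varepsilon$ of each $q_0$, you need $\phi$ positive and nonconstant on every neighbourhood of $q_0$, with parities chosen to control signs.
\item A small slip: a path of length $\ell$ has split reliability $\ell q(1-q)^{\ell-1}$, not $\ell q^{\ell-1}(1-q)$.
\end{itemize}

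The idea you are missing is the edge-substitution result of Brown and Mol that the paper uses. If $r\neq1$ is a root of $\Relchangep{G}{q}$, then every solution of $S=\frac{r}{1-r}R$ is a root of $\Relchangep{G[H(u,v)]}{q}$ or of $R$. This transports an already dense set of roots through one fixed gadget, so no two-parameter family has to be analysed by hand.

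\begin{itemize}
\item \textbf{The interval $[\beta,-\frac{1}{2}]$.} When $u,v$ are nonadjacent in $H$, $G$ may be a multigraph. Multigraph roots are dense in $[-1,0]$, so $s=r/(1-r)$ ranges over a dense subset of $[-\frac{1}{2},0]$. For $H=K_4-e$, the real solution $q(s)$ of $s(-4q^3+q^2+2q+1)=6q^3+2q^2$ is continuous on $[-\frac{1}{2},-0.15]$ with $q(-\frac{1}{2})=\beta$. This gives $[\beta,-\frac{1}{2}]$.
\item \textbf{The interval $[-\frac{1}{2},0]$.} This part comes from a series (cycle) structure rather than a parallel one. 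Taking $G=C_n$ gives $s=-1/n\to0$. The gadgets $H_{\eta,k}$ (paths with some edges replaced by triangles) have split reliability with simple roots $-k/(2(\eta-1))$, which are dense in $[-\frac{1}{2},0]$. A sign-change argument near these simple roots then covers $[-\frac{1}{2},0]$.
\end{itemize}

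In particular, $\beta$ is simply the image of the extreme multigraph root $r=-1$ under this particular gadget. It is not a point where simplicity of the graph forces some inequality to fail.
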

\begin{proof}
    To begin, we need two results from \cite{brownmol}. Let $G$ be a graph with specified vertices $s$ and $t$. The {\em split reliability polynomial} of $G_{s,t}$, $\splitRelchangep{G}{s}{t}{q}$, is the probability that $G$ is split into two components with $s$ in one component and $t$ in the other if every edge has probability $q$ of being down. For more information on split reliability polynomials, see \cite{thesis}.
    
    In addition, we define a gadget $H(u,v)$ as a connected graph $H$ with specified vertices $u$ and $v$ ($u\neq v$). If $G$ is another graph, an {\em edge substitution} of $H(u,v)$ into $G$ labeled $G[H(u,v)]$ is the graph formed by replacing each edge $\{a,b\}\in E(G)$ by a copy of $H$, identifying $u$ with $a$ and $v$ with $b$ (or $u$ with $b$ and $v$ with $a$, as this does not change the following result). Note that if $u$ and $v$ are nonadjacent in $H$, then $G[H(u,v)]$ is a graph even when $G$ is a multigraph.
    
    Let $r\neq1$ be a root of $\Relchangep{G}{q}$. Then from  \cite{brownmol}, any solution of the equation
    \begin{eqnarray}\label{molsprel}
        \splitRelchangep{H}{u}{v}{q}&=&\frac{r}{1-r} \cdot \Relchangep{H}{q}
    \end{eqnarray}
    is either a root of $\Relchangep{G[H(u,v)]}{q}$ or $\Relchangep{H}{q}$.
    
    We begin by showing that $[-1/2,0]$ is in the closure of the real reliability roots of graphs. 
    First note that
    \begin{eqnarray*}
        \Relchangep{C_{n}}{q}&=&(1-q)^{n-1}(1+(n-1)q).
    \end{eqnarray*}
    Let $r=-\frac{1}{n-1}$, a root of $\Relchangep{C_{n}}{q}$. For this choice of $r$, (\ref{molsprel}) bceomes:
    \begin{eqnarray*}
        \splitRelchangep{H}{u}{v}{q}+\frac{1}{n} \cdot \Relchangep{H}{q}=0.
    \end{eqnarray*}
Now we make the following observation. Suppose that $\sigma$ is a simple root of $\splitRelchangep{H}{u}{v}{q}$. Then $\splitRelchangep{H}{u}{v}{q}$ changes sign on either side of $\sigma$, that is, for fixed small $\varepsilon > 0$, $\splitRelchangep{H}{u}{v}{\sigma-\varepsilon}$ and $\splitRelchangep{H}{u}{v}{\sigma+\varepsilon}$ have opposite signs. Clearly, if we take $n$ large enough, then 
\[ \splitRelchangep{H}{u}{v}{\sigma-\varepsilon}+\frac{1}{n}\cdot \Relchangep{H}{\sigma-\varepsilon} \]
has the same sign as $\splitRelchangep{H}{u}{v}{\sigma-\varepsilon}$, and similarly 
\[ \splitRelchangep{H}{u}{v}{\sigma+\varepsilon}+\frac{1}{n}\cdot \Relchangep{H}{\sigma+\varepsilon} \]
has the same sign as $\splitRelchangep{H}{u}{v}{\sigma+\varepsilon}$.
By the Intermediate Value Theorem, there is a real root of $\splitRelchangep{H}{u}{v}{q}+\frac{1}{n}\cdot\Relchangep{H}{q}$ in $(\sigma-\varepsilon,\sigma+\varepsilon)$.
Therefore, by (\ref{molsprel}), if $\sigma$ is a simple root of $\splitRelchangep{H}{u}{v}{q}$, for large enough $n$ we can find $t \in (\sigma-\varepsilon,\sigma+\varepsilon)$ such that $t$ is a root of $\Relchangep{C_{n}[H(u,v)]}{q}$.


Let $\eta$ and $k$ be positive integers with $\eta \geq 3$ and $1 \leq k \leq \eta - 2$. Construct a graph $H = H_{\eta,k}$ by taking the graph $P_{\eta}$ (a path of order $\eta$) with endpoints $u$ and $v$ and replacing any $\eta-1-k$ of the edges with a copy of $K_{3}$ (see Figure \ref{pathofk2andk3}).
    \begin{figure}
        \centering
        \begin{tikzpicture}[line cap=round,line join=round,>=triangle 45,x=1cm,y=1cm]
        \clip(-1,-1) rectangle (10.5,1.5);
        \draw [line width=1pt] (0,0)-- (2,0);
        \draw [line width=1pt] (4,0)-- (6,0);
        \draw [line width=1pt] (6,0)-- (6.25,0);
        \draw [line width=1pt] (7.75,0)-- (8,0);
        \draw [line width=1pt] (2,0)-- (4,0);
        \draw [line width=1pt] (8,0)-- (10,0);
        \draw [line width=1pt] (2,0)-- (3,1);
        \draw [line width=1pt] (4,0)-- (3,1);
        \draw [line width=1pt] (8,0)-- (9,1);
        \draw [line width=1pt] (10,0)-- (9,1);
        \draw (-0.5,0.5) node[anchor=north west] {$u$};
        \draw (10,0.5) node[anchor=north west] {$v$};
        \begin{scriptsize}
        \draw [fill=black] (0,0) circle (3pt);
        \draw [fill=black] (2,0) circle (3pt);
        \draw [fill=black] (4,0) circle (3pt);
        \draw [fill=black] (6,0) circle (3pt);
        \draw [fill=black] (8,0) circle (3pt);
        \draw [fill=black] (10,0) circle (3pt);
        \draw [fill=black] (6.5,0) circle (1pt);
        \draw [fill=black] (7,0) circle (1pt);
        \draw [fill=black] (7.5,0) circle (1pt);
        \draw [fill=black] (3,1) circle (3pt);
        \draw [fill=black] (9,1) circle (3pt);
        \end{scriptsize}
        \end{tikzpicture}
        \caption{Graph $H$.}
        \label{pathofk2andk3}
    \end{figure}

     In order for $H$ to be split into two components with $u$ and $v$ in different components, either exactly one of the single edges is down or exactly one of the $K_{3}$ graphs is split into two components with the vertices on the path in different components (the split reliability of $K_{3}$). The probability of exactly one of the single edges being down is
\begin{eqnarray*}
     kq(1-q)^{k-1}((1-q)^{2}(1+2q))^{\eta-1-k}
\end{eqnarray*}
where we have $k$ choices for the inactive edge and $(1-q)^{2}(1+2q)$ is the reliability of $K_{3}$. The probability that the disconnection happens at a $K_{3}$ is
\begin{eqnarray*}
     (\eta-1-k)(2q^{2}(1-q))(1-q)^{k}((1-q)^{2}(1+2q))^{\eta-2-k}
\end{eqnarray*}
where we have $\eta-1-k$ choices for the $K_{3}$ that is down and $2q^{2}(1-q)$ is the split reliability of $K_{3}$ for any choice of $u$ and $v$. We thus find that
    \begin{eqnarray*}
        \splitRelchangep{H}{u}{v}{q}&=&kq(1-q)^{k-1}((1-q)^{2}(1+2q))^{\eta-1-k}\\
        & &+(\eta-1-k)(2q^{2}(1-q))(1-q)^{k}((1-q)^{2}(1+2q))^{\eta-2-k}\\
        &=&q(1-q)^{2\eta-k-3}(1+2q)^{\eta-k-2}(k+2(\eta-1)q).
    \end{eqnarray*}
Clearly $\splitRelchangep{H}{u}{v}{q}$ has a simple root $q=-\frac{k}{2(\eta-1)}$, so by choosing specific values for $k$ and $\eta$, we can, with the previous observation, have real reliability roots of graphs arbitrarily close to any rational number in $[-\frac{1}{2},0]$. 

We now use a different gadget to prove density on $[\beta,-\frac{1}{2}]$.
    Setting $s = r/(1-r)$, we can rewrite (\ref{molsprel}) as 
    \begin{eqnarray*}\label{molsprels}
        \splitRelchangep{H}{u}{v}{q}&=&s \cdot \Relchangep{H}{q}.
    \end{eqnarray*}
    Since we know that the roots of all-terminal reliabilities of multigraphs are dense on $[-1,0]$, we see that the possible values for $s = r/(1-r)$ are dense on $[-\frac{1}{2},0]$. 
    
    For $H$ we will select $K_{4}-e$ where $e=\{u,v\}$ (see Figure \ref{K_4-e}). Calculations show that 
    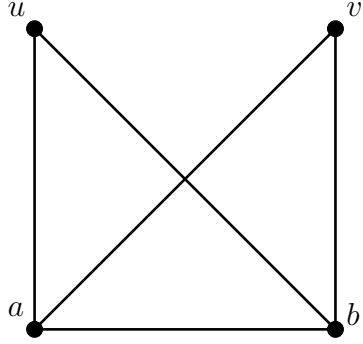
\begin{figure}
        \centering
        \begin{tikzpicture}[line cap=round,line join=round,>=triangle 45,x=1cm,y=1cm]
        \clip(0.5,0.5) rectangle (6,6);
        \draw [line width=1pt] (1,1)-- (5,1);
        \draw [line width=1pt] (1,1)-- (5,5);
        \draw [line width=1pt] (1,1)-- (1,5);
        \draw [line width=1pt] (5,1)-- (1,5);
        \draw [line width=1pt] (5,1)-- (5,5);
        \draw (0.5,5.5) node[anchor=north west] {$u$};
        \draw (5,5.5) node[anchor=north west] {$v$};
        \draw (0.5,1.5) node[anchor=north west] {$a$};
        \draw (5,1.5) node[anchor=north west] {$b$};
        \begin{scriptsize}
        \draw [fill=black] (1,1) circle (3pt);
        \draw [fill=black] (5,1) circle (3pt);
        \draw [fill=black] (1,5) circle (3pt);
        \draw [fill=black] (5,5) circle (3pt);
        \end{scriptsize}
        \end{tikzpicture}
        \caption{Our gadget $K_{4}-e$ with $e=\{u,v\}$}
        \label{K_4-e}
    \end{figure}
    \begin{eqnarray*}
        \Relchangep{K_{4}-e}{q}&=&(1-q)^{2}(-4q^{3}+q^{2}+2q+1)
     \end{eqnarray*}
and
    \begin{eqnarray*}
        \splitRelchangep{K_{4}-e}{u}{v}{q}&=&(1-q)^{2}(6q^{3}+2q^{2}),\\
    \end{eqnarray*}
so that
    \begin{eqnarray*}
    \splitRelchangep{K_{4}-e}{u}{v}{q}&=&s \cdot \Relchangep{K_{4}-e}{q}
    \end{eqnarray*}
if and only if 
    \begin{eqnarray*}
        s(-4q^{3}+q^{2}+2q+1)-6q^{3}-2q^{2} & = & 0.
    \end{eqnarray*}
    Solving the above equation in terms of $q$ via a Computer Algebra System gives us a solution:
    \begin{eqnarray*}
        q&=&\bigg((s-2)\big((12s+18)\sqrt{3}\sqrt{112s^{4}+256s^{3}+143s^{2}-8s}+253s^{3}+624s^{2}+390s-8\big)^{1/3}\\
        & &+\big(12s\sqrt{3}\sqrt{112s^{4}+256s^{3}+143s^{2}-8s}+18\sqrt{3}\sqrt{112s^{4}+256s^{3}+143s^{2}-8s}+\\
        & &253s^{3}+624s^{2}+390s-8\big)^{2/3}\bigg)\big/\\
        & &\bigg(\big((12s+18)\sqrt{3}\sqrt{112s^{4}+256s^{3}+143s^{2}-8s}+253s^{3}+624s^{2}+390s-8\big)^{1/3}+\\
        & &(12s+18)\bigg).
    \end{eqnarray*}
    The above equation is continuous for $s\in[-\frac{1}{2},-0.15]$ (there is a discontinuity to the right of $s=-0.15$) and is equal to $\beta$ when $s=-\frac{1}{2}$. The plot of the equation is shown in Figure \ref{plot_for_roots_to_-0.57}.
    \begin{figure}
        \centering
        \includegraphics[width=3.0in]{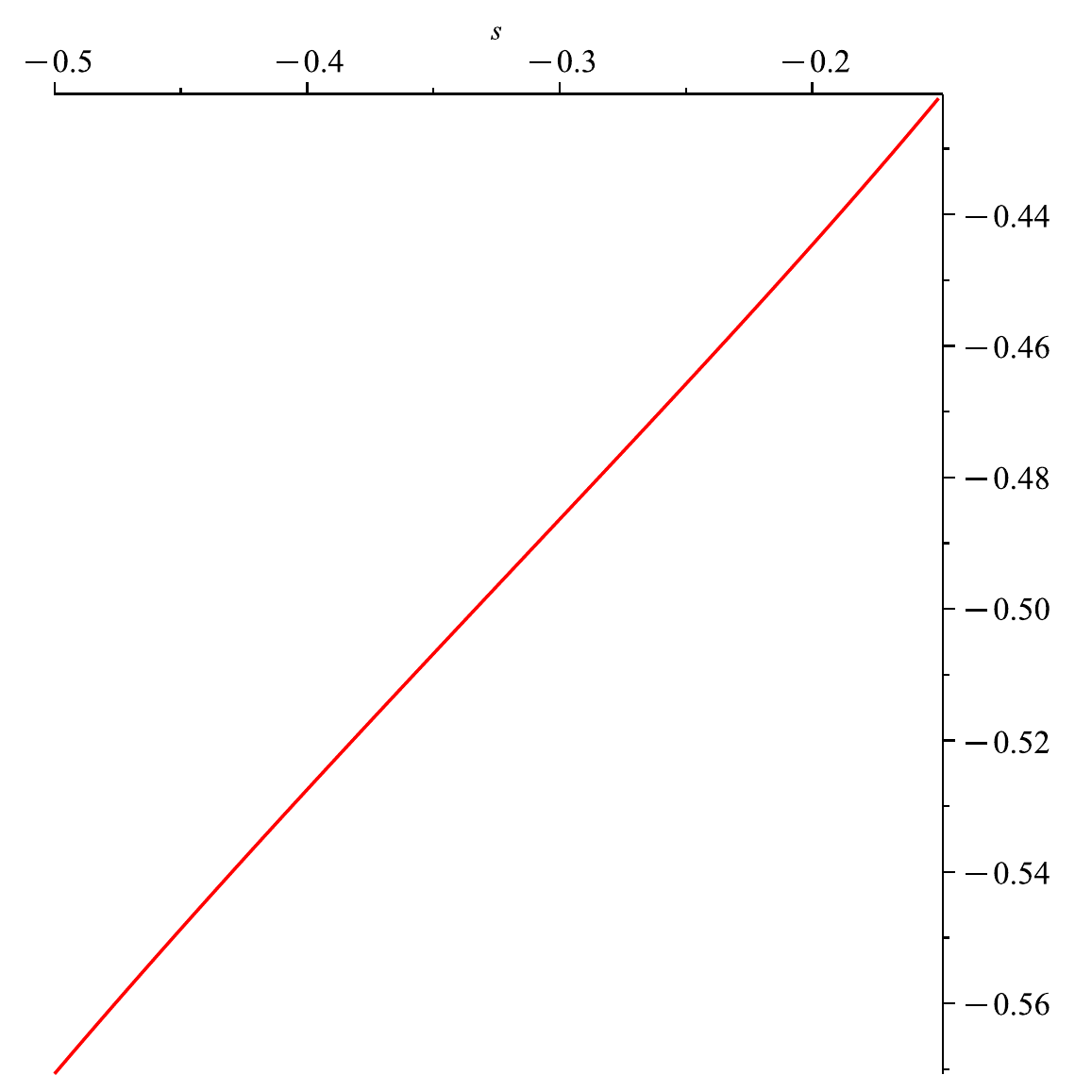}
        \caption{The plot of the possible reliability roots of a graph using a $K_{4}-e$ gadget replacement for $s\in[-\frac{1}{2},-0.15]$.}
        \label{plot_for_roots_to_-0.57}
    \end{figure}
    Since possible values for $s$ are dense in $[-\frac{1}{2},0]$, and the image of a dense set under a continuous map is dense in its image, it must be the case that the closure of the reliability roots for graphs contains the interval $[\beta,0]$.  
\end{proof}
\section{Open Problems}
While we were able to show that almost every graph has a nonreal reliability root, we can ask if almost every graph has a real reliability root (ignoring the obvious root $q=1$). Using the $H$-form of the reliability polynomial
\begin{eqnarray*}
    \Relchangep{G}{q}&=&(1-q)^{n-1}\sum_{i=0}^{d}H_{i}q^{i},
\end{eqnarray*}
we can also ask for which graphs $\Relchangep{G}{q}/(1-q)^{n-1}$ have 0 or 1 real roots. Since we know $H_{0} = 1 > 0$ and $H_d > 0 $,  if the graph's corank is odd, then we must have at least 1 real root.
By examining graphs for their reliability polynomials, we see that a majority of graphs of order up to 7 have no or exactly 1 real root. Is it perhaps the case in general that almost every graph has exactly 1 or 0 real roots?


In Section \ref{atreldensity} we provided an interval of width more than $1/2$ contained in the closure of the real reliability roots of graphs, but we conjecture that indeed the region should be all of $[-1,0] \cup \{1\}$. We already know that $-1$ is not a reliability root of a graph \cite{rationalroots}, but perhaps $-1$ is in the closure of the roots of reliability polynomials of graphs. Calculations show that the reliability polynomials of complete graphs have real roots approaching $-1$, but a closed form of $\Relchangep{K_{n}}{q}$ is not known, the observation remains tantalizingly out of reach at the moment.

\section*{Acknowledgements}  
 
J. Brown acknowledges research support from the Natural Sciences and Engineering Research Council of Canada (NSERC), grant RGPIN 2024-03846.

\bibliographystyle{plain}

\end{document}